 \newtheoremstyle{mytheorem}
 {3pt}
 {3pt}
 {\slshape}
 {}
 {\bfseries}
 {.}
 { }
 {}
\numberwithin{equation}{section}
\theoremstyle{theorem}
\newtheorem{theorem}{Theorem}[section]
\newtheorem*{theorem*}{Theorem}
\newtheorem{lemma}[theorem]{Lemma}
\theoremstyle{definition}
\newtheorem*{example*}{Example}
\theoremstyle{remark}
\newtheorem{remark}{Remark}[section]
\newtheorem*{remark*}{Remark}
\newtheorem*{remarks*}{Remarks}
\newcommand{\Keywords}[1]{\ifthenelse{\isempty{#1}}{}{\smallskip \smallskip \noindent \textbf{Keywords}. #1}}
\newcommand{\MSC}[2][2010]{\ifthenelse{\isempty{#2}}{}{\smallskip \smallskip \noindent \textbf{#1MSC}. #2}}
\newcommand{\abstractnote}[1]{\ifthenelse{\isempty{#1}}{}{\smallskip \smallskip \noindent \textsuperscript{\dag}#1}}
\def\specialsection{\@startsection{section}{1}%
  \z@{\linespacing\@plus\linespacing}{.5\linespacing}%
  {\normalfont}}
\def\section{\@startsection{section}{1}%
  \z@{.7\linespacing\@plus\linespacing}{.5\linespacing}%
  {\normalfont\scshape}}
\patchcmd{\@settitle}{\uppercasenonmath\@title}{\Large\boldmath}{}{}
\patchcmd{\@settitle}{\begin{center}}{\begin{flushleft}}{}{}
\patchcmd{\@settitle}{\end{center}}{\end{flushleft}}{}{}
\patchcmd{\@setauthors}{\MakeUppercase}{\normalsize}{}{}
\patchcmd{\@setauthors}{\centering}{\raggedright}{}{}
\patchcmd{\section}{\scshape}{\large\bfseries\boldmath}{}{}
\patchcmd{\subsection}{\bfseries}{\bfseries\boldmath}{}{}
\renewcommand{\@secnumfont}{\bfseries}
\patchcmd{\@startsection}{\@afterindenttrue}{\@afterindentfalse}{}{}
\patchcmd{\abstract}{\leftmargin3pc}{\leftmargin1pc}{}{}
\def\maketitle{\par
  \@topnum\z@ 
  \@setcopyright
  \thispagestyle{empty}
  \ifx\@empty\shortauthors \let\shortauthors\shorttitle
  \else \andify\shortauthors
  \fi
  \@maketitle@hook
  \begingroup
  \@maketitle
  \toks@\@xp{\shortauthors}\@temptokena\@xp{\shorttitle}%
  \toks4{\def\\{ \ignorespaces}}
  \edef\@tempa{%
    \@nx\markboth{\the\toks4
      \@nx\MakeUppercase{\the\toks@}}{\the\@temptokena}}%
  \@tempa
  \endgroup
  \c@footnote\z@
  \@cleartopmattertags
}
\newcommand{\fl}[1]{\left\lfloor#1\right\rfloor}
\title[1-Shell TSPPs modulo powers of $5$]{1-Shell totally symmetric plane partitions (TSPPs) modulo powers of $5$}
\author[S. Chern]{Shane Chern}
\address{Department of Mathematics, Penn State University, University Park, PA 16802, USA}
\email{shanechern@psu.edu}
\date{}
\begin{document}

%

\maketitle

\begin{abstract}

Let $s(n)$ be the number of 1-shell totally symmetric plane partitions (TSPPs) of $n$. In this paper, an infinite family of congruences modulo powers of $5$ for $s(n)$ will be deduced through an elementary approach. Namely,
$$s\left(2\cdot 5^{2\alpha-1}n+5^{2\alpha-1}\right)\equiv 0 \pmod{5^{\alpha}}.$$

\Keywords{Partitions, 1-shell TSPPs, congruences, powers of $5$.}

\MSC{11P83, 05A17.}
\end{abstract}

\section{Introduction}

The notion of \textit{1-shell totally symmetric plane partitions}, or \textit{1-shell TSPPs}, was introduced by Blecher \cite{Ble2012}. Here a 1-shell TSPP is a plane partition with a self-conjugate first row/column (as an ordinary partition) and all other entries being $1$.
We have examples like
$$
\begin{array}{ccc}
3 & 2 & 1\\
2 & 1 & 1\\
1 & 1
\end{array}
\quad\text{and}\quad
\begin{array}{cccc}
4 & 4 & 2 & 2 \\
4 & 1 & 1\\
2 & 1 & 1\\
2
\end{array}.$$
Let $s(n)$ denote the number of 1-shell TSPPs of $n$. The following generating function identity was obtained by Blecher \cite{Ble2012}:
\begin{equation}
\sum_{n\ge 0}s(n)q^n=1+\sum_{n\ge 1}q^{3n-2}\prod_{i=0}^{n-2}\left(1+q^{6i+3}\right).
\end{equation}
In \cite{HS2014}, Hirschhorn and Sellers further proved that $s(n)=0$ if $n\equiv 0,2 \pmod{3}$ for $n\ge 1$. It was also shown by them that for $n\ge 0$,
\begin{equation}\label{eq:fg}
s(6n+1)=g(n),
\end{equation}
where $g(n)$ is defined by
\begin{equation}\label{eq:gn}
\sum_{n\ge 0}g(n)q^n=\frac{(q^2;q^2)_{\infty}^3}{(q;q)_{\infty}^2}.
\end{equation}
Throughout, we adopt the standard notation
$$(a;q)_\infty := \prod_{n\ge 0}(1-aq^n).$$

Like other partition functions, the arithmetic properties of $s(n)$ were extensively studied as well. In particular, congruences modulo small powers of $5$ include:
\begin{gather}
s(10n+5)\equiv 0 \pmod{5},\label{eq:cong-hir-sel}\\
s(250n+125)\equiv 0 \pmod{25}\label{eq:cong-xia}\\
\intertext{and}
s(1250n+125,\ 1125)\equiv 0 \pmod{125},\label{eq:cong-chern}
\end{gather}
the first of which was shown by Hirschhorn and Sellers \cite{HS2014}, the second of which was shown by Xia \cite{Xia2015}, and the last of which was due to the author \cite{Che2017}. From these instances, it is natural to expect an infinite family of congruences modulo arbitrary powers of $5$.

In this paper, along with other separate congruences, we shall settle this problem.

\begin{theorem}\label{th:main}
	For $\alpha\ge 1$ and $n\ge 0$, we have
	\begin{equation}\label{eq:cong-main}
	s\left(2\cdot 5^{2\alpha-1}n+5^{2\alpha-1}\right)\equiv 0 \pmod{5^{\alpha}}.
	\end{equation}
\end{theorem}

\begin{remark}
	Taking $\alpha=1$ and $2$, respectively, in \eqref{eq:cong-main}, one may recover \eqref{eq:cong-hir-sel} and \eqref{eq:cong-xia}. To obatin \eqref{eq:cong-chern}, we need one more step of 5-dissection; the details will be discussed in Section \ref{Sect:more-cong}.
\end{remark}

Instead of using the well-known approach of Gordon and Hughes \cite{GH1981}, which relies on modular forms, we will give an elementary proof of Theorem \ref{th:main}. Our idea is similar to that in the work of Hirschhorn and the author \cite{CH2019}.

\section{Fundamental relations}

We require two auxiliary functions:
\begin{gather*}
\xi=\xi(q):=q^{-4}\frac{E(q^2)^3 E(q^{25})^2}{E(q)^2 E(q^{50})^3}\\
\intertext{and}
X=X(q):=\frac{E(q^2)^2 E(q^5)^4}{E(q)^4 E(q^{10})^2},
\end{gather*}
where $E(q):=(q;q)_\infty$. Also, the $U$-operator is defined by
$$U\Bigg(\sum_{n\ge n_0} a_n q^n\Bigg)=\sum_{5n\ge n_0} a_{5n}q^n.$$

\subsection{Initial relations}

We begin with several initial relations.

\begin{align}
U(X)&=11X -60X^2+ 175X^3 -250X^4+ 125X^5,
\label{eq:U-X}\\
U(X^2)&=-24X+ 675X^2 -6700X^3+ 37300X^4 -132500X^5\notag\\
&+ 316875X^6 -512500X^7+ 531250X^8 -312500X^9+ 78125 X^{10},
\\
U(X^3)&=21X -2010X^2+ 49865X^3 -615750X^4+ 4744125X^5\notag\\
& -25301250X^6+ 98718750X^7 -290062500X^8+ 649187500X^9\notag\\
& -1103906250X^{10}+ 1401562500X^{11} -1281250000X^{12}\notag\\
& + 791015625X^{13}-292968750X^{14}+ 48828125X^{15},
\\
U(X^4)&=-8 X + 2984 X^2 - 164200 X^3 + 3899475 X^4\notag\\
& - 54212000 X^5 + 
508067500 X^6 - 3469550000 X^7\notag\\
& + 18095862500 X^8 - 74240000000 X^9 + 
243974062500 X^{10}\notag\\
& - 648743750000 X^{11} + 1400732421875 X^{12} - 
2449687500000 X^{13}\notag\\
& + 3439882812500 X^{14} - 3816406250000 X^{15} + 
3260253906250 X^{16}\notag\\
& - 2060546875000 X^{17} + 903320312500 X^{18} - 
244140625000 X^{19}\notag\\
& + 30517578125 X^{20}
\\
\intertext{and}
U(X^5)&=X - 2650 X^2 + 316275 X^3\notag\\
& - 13553000 X^4 + 314189375 X^5\notag\\
& - 
4710706250 X^6 + 50353584375 X^7\notag\\
& - 407308906250 X^8 + 
2592548671875 X^9\notag\\
& - 13334386718750 X^{10} + 56443662109375 X^{11}\notag\\
& - 
199097265625000 X^{12} + 589929902343750 X^{13}\notag\\
& - 
1474730957031250 X^{14} + 3113805664062500 X^{15}\notag\\
& - 
5542136230468750 X^{16} + 8271881103515625 X^{17}\notag\\
& - 
10261718750000000 X^{18} + 10441009521484375 X^{19}\notag\\
& - 
8547973632812500 X^{20} + 5478668212890625 X^{21}\notag\\
& - 
2639770507812500 X^{22} + 896453857421875 X^{23}\notag\\
& - 
190734863281250 X^{24} + 19073486328125 X^{25}.
\end{align}

Also,
\begin{align}
U(\xi)&=5X,
\label{eq:U-xi}\\
U(\xi X)&=-20 X + 305 X^2 - 1475 X^3 + 3875 X^4 - 5625 X^5 + 3125 X^6,
\\
U(\xi X^2)&=24 X - 1460 X^2 + 23800 X^3 - 191375 X^4 + 948125 X^5\notag\\
& - 3156250 X^6 + 
7284375 X^7 - 11625000 X^8 + 12187500 X^9\notag\\
& - 7421875 X^{10} + 
1953125 X^{11},
\\
U(\xi X^3)&=-9 X + 2705 X^2 - 109575 X^3 + 1915000 X^4\notag\\
& - 19542500 X^5 + 
133646875 X^6 - 658471875 X^7\notag\\
& + 2433656250 X^8 - 6892656250 X^9 + 
15069921875 X^{10}\notag\\
& - 25314453125 X^{11} + 32082031250 X^{12} - 
29580078125 X^{13}\notag\\
& + 18603515625 X^{14} - 7080078125 X^{15} + 
1220703125 X^{16}
\\
\intertext{and}
U(\xi X^4)&=X - 2685 X^2 + 254975 X^3 - 8602875 X^4\notag\\
& + 156808125 X^5 - 
1845540625 X^6 + 15434215625 X^7\notag\\
& - 97122593750 X^8 + 
476838593750 X^9 - 1869609765625 X^{10}\notag\\
& + 5939105859375 X^{11} - 
15403876953125 X^{12} + 32685605468750 X^{13}\notag\\
& - 56548388671875 X^{14} + 
79031005859375 X^{15} - 87777099609375 X^{16}\notag\\
& + 75495605468750 X^{17} - 
48303222656250 X^{18} + 21545410156250 X^{19}\notag\\
& - 5950927734375 X^{20} + 
762939453125 X^{21}.
\end{align}

Here we only prove \eqref{eq:U-X} and \eqref{eq:U-xi}. The rest can be demonstrated analogously but with lengthier calculation.

Recall some preliminary results. Let
$$
R(q)=\left (\begin{matrix} q,q^4\\q^2,q^3\end{matrix};q^5\right )_\infty.
$$
Then (\cite[(8.1.1)]{Hir2017})
\begin{align}\label{eq:E(q)-dis}
E(q)=E(q^{25})\big (R(q^5)^{-1}-q-q^2R(q^5)\big )
\end{align}
and (\cite[(8.4.4)]{Hir2017})
\begin{align}\label{eq:1/E(q)-dis}
\frac1{E(q)}=\frac{E(q^{25})^5}{E(q^5)^6}
&\big (R(q^5)^{-4}+qR(q^5)^{-3}+2q^2R(q^5)^{-2}+3q^3R(q^5)^{-1} \nonumber\\
&+5q^4-3q^5R(q^5)+2q^6R(q^5)^2-q^7R(q^5)^3+q^8R(q^5)^4\big ).
\end{align}
Let (note that at this place we have an extra factor of $q^{-1}$ comparing with the $K$ defined in \cite{CH2019})
$$K=q^{-1}\frac{E(q^2)E(q^5)^5}{E(q)E(q^{10})^5}.$$
Then (\cite[(9.10)]{CH2019})
\begin{align}\label{eq:K+1}
K+1=q^{-1}\frac{E(q^2)^4E(q^5)^2}{E(q)^2E(q^{10})^4},
\end{align}
(\cite[(9.12)]{CH2019})
\begin{align}
K-4=q^{-1}\frac{E(q)^3E(q^5)}{E(q^2)E(q^{10})^3}
\end{align}
and (\cite[(9.11)]{CH2019})
\begin{align}
X=\frac{E(q^2)^2 E(q^5)^4}{E(q)^4 E(q^{10})^2}=\frac{K}{K-4}.
\end{align}
Further, for $\alpha\in\mathbb{Z}_{\ge 0}$ and $\beta\in\mathbb{Z}$, we define
\begin{equation}\label{eq:P-def}
P(\alpha,\beta):=\frac{1}{q^{\alpha}R(q)^{\alpha+2\beta}R(q^2)^{2\alpha-\beta}}+(-1)^{\alpha+\beta}q^{\alpha}R(q)^{\alpha+2\beta}R(q^2)^{2\alpha-\beta}.
\end{equation}
The following results were shown by Tang and the author \cite{CT2019} (see also \cite{CH2019} in which the definition of $P(\alpha,\beta)$ is slightly different).
\begin{gather}
P(\alpha,\beta+1)=4K^{-1} P(\alpha,\beta)+P(\alpha,\beta-1)\label{eq:rec-sqr-1}\\
\intertext{and}
P(\alpha+2,\beta)=K P(\alpha+1,\beta)+ P(\alpha,\beta),\label{eq:rec-sqr-2}
\end{gather}
where
\begin{gather}
P(0,0)=2,\label{eq:p00}\\[3pt]
P(0,1)=\frac{R(q^2)}{R(q)^2}-\frac{R(q)^2}{R(q^2)} =4K^{-1},\label{eq:p01}\\[3pt]
P(1,0)=\frac{1}{qR(q)R(q^2)^2}-q R(q)R(q^2)^2 =K\label{eq:p10}\\
\intertext{and}
P(1,-1)=\frac{R(q)}{qR(q^2)^3}+\frac{q R(q^2)^3}{R(q)} =4K^{-1}-2+K.\label{eq:p1-1}
\end{gather}
Hence, for any $\alpha\in\mathbb{Z}_{\ge 0}$ and $\beta\in\mathbb{Z}$, $P(\alpha,\beta)$ can be represented in $\mathbb{Z}[K,K^{-1}]$ by the above recurrences.

\begin{remark}
In a private communication, Mike Hirschhorn \cite{Hir2019} further transformed \eqref{eq:rec-sqr-1}--\eqref{eq:p1-1} into a beautiful bivariate generating function for $P(\alpha,\beta)$:
\begin{align}\label{eq:Hir-gf-1}
\sum_{\alpha,\beta\ge 0}P(\alpha,\beta)x^\alpha y^\beta=\frac{2-Kx-4K^{-1}y+(K+2+4K^{-1})xy}{(1-Kx-x^2)(1-4K^{-1}y-y^2)}.
\end{align}
Analogously, one could obtain
\begin{align}\label{eq:Hir-gf-2}
\sum_{\alpha,\beta\ge 0}P(\alpha,-\beta)x^\alpha y^\beta=\frac{2-Kx+4K^{-1}y+(K-2+4K^{-1})xy}{(1-Kx-x^2)(1+4K^{-1}y-y^2)}.
\end{align}
\end{remark}

Now let us proceed with the proofs of \eqref{eq:U-X} and \eqref{eq:U-xi}. First,
\begin{align}
U(X)&=\frac{E(q)^4}{E(q^2)^2}U\Big(E(q^{50})^2\big (R(q^{10})^{-1}-q^2-q^4R(q^{10})\big )^2\notag\\
&\quad\quad\quad\quad\times \left(\frac{E(q^{25})^5}{E(q^5)^6}\right)^4
\big (R(q^5)^{-4}+qR(q^5)^{-3}+2q^2R(q^5)^{-2}+3q^3R(q^5)^{-1} \notag\\
&\quad\quad\quad\quad+5q^4-3q^5R(q^5)+2q^6R(q^5)^2-q^7R(q^5)^3+q^8R(q^5)^4\big )^4\Big)\notag\\
\label{eq:2.1-s1}\\
&=q^4\frac{E(q^5)^{20} E(q^{10})^2}{E(q)^{20} E(q^2)^2}\big(P(4,6)-4P(3,6)-80P(3,5)+212P(3,4)\notag\\
&\quad\quad\quad\quad\quad\quad+14P(2,6)+210P(2,5)-418P(2,4)-2200P(2,3)\notag\\
&\quad\quad\quad\quad\quad\quad+1860P(2,2)+708P(1,4)+2800P(1,3)\notag\\
&\quad\quad\quad\quad\quad\quad-1840P(1,2)-2400P(1,1)+60P(1,0)\notag\\
&\quad\quad\quad\quad\quad\quad+1815P(0,2)+3000P(0,1)-1015\big)\label{eq:2.1-s2}\\
&=q^4\frac{E(q^5)^{20} E(q^{10})^2}{E(q)^{20} E(q^2)^2}\frac{(K+1)^4 (K-4)^2 }{K^6}\notag\\
&\quad\times (K^4+144K^3+ 976 K^2 + 1024 K+2816)\label{eq:2.1-s3}\\
&=q^4\frac{E(q^5)^{20} E(q^{10})^2}{E(q)^{20} E(q^2)^2}\frac{(K+1)^4 (K-4)^7}{K^7}\Bigg(11 \left(\frac{K}{K-4}\right)-60\left(\frac{K}{K-4}\right)^2\notag\\
&\quad+175\left(\frac{K}{K-4}\right)^3-250\left(\frac{K}{K-4}\right)^4+125\left(\frac{K}{K-4}\right)^5\Bigg)\notag\\
&=q^4\frac{E(q^5)^{20} E(q^{10})^2}{E(q)^{20} E(q^2)^2}\left(q^{-1}\frac{E(q^2)^4E(q^5)^2}{E(q)^2E(q^{10})^4}\right)^4 \left(q^{-1}\frac{E(q)^3E(q^5)}{E(q^2)E(q^{10})^3}\right)^7\notag\\ &\quad\times\left(q^{-1}\frac{E(q^2)E(q^5)^5}{E(q)E(q^{10})^5}\right)^{-7}\big(11X -60X^2+ 175X^3 -250X^4+ 125X^5\big)\notag\\
&=11X -60X^2+ 175X^3 -250X^4+ 125X^5.\notag
\end{align}
We arrive at \eqref{eq:U-X}. Also,
\begin{align}
U(\xi)&=\frac{E(q^5)^2}{E(q^{10})^3}U\Big(q^{-4}E(q^{50})^3\big (R(q^{10})^{-1}-q^2-q^4R(q^{10})\big )^3\notag\\
&\quad\quad\quad\quad\times \left(\frac{E(q^{25})^5}{E(q^5)^6}\right)^2
\big (R(q^5)^{-4}+qR(q^5)^{-3}+2q^2R(q^5)^{-2}+3q^3R(q^5)^{-1} \notag\\
&\quad\quad\quad\quad+5q^4-3q^5R(q^5)+2q^6R(q^5)^2-q^7R(q^5)^3+q^8R(q^5)^4\big )^2\Big)\notag\\
\label{eq:2.6-s1}\\
&=q^2\frac{E(q^5)^{12}}{E(q)^{12}}\big({-15}P(2,2)+20P(2,1)+50P(1,2)-60P(1,0)\notag\\
&\quad\quad\quad\quad\quad\quad-20P(1,-1)-5P(0,3)-60P(0,2)+75\big)\label{eq:2.6-s2}\\
&=q^2\frac{E(q^5)^{12}}{E(q)^{12}}\frac{5(K+1)^2 (K-4)^3}{K^3}\label{eq:2.6-s3}\\
&=q^2\frac{E(q^5)^{12}}{E(q)^{12}}\frac{(K+1)^2 (K-4)^4}{K^4}\Bigg(5\left(\frac{K}{K-4}\right)\Bigg)\notag\\
&=q^2\frac{E(q^5)^{12}}{E(q)^{12}}\left(q^{-1}\frac{E(q^2)^4E(q^5)^2}{E(q)^2E(q^{10})^4}\right)^2 \left(q^{-1}\frac{E(q)^3E(q^5)}{E(q^2)E(q^{10})^3}\right)^4\notag\\ &\quad\times\left(q^{-1}\frac{E(q^2)E(q^5)^5}{E(q)E(q^{10})^5}\right)^{-4}(5X)\notag\\
&=5X.\notag
\end{align}
This completes the proof of \eqref{eq:U-xi}.

Notably, the derivation of \eqref{eq:2.1-s2}, \eqref{eq:2.1-s3}, \eqref{eq:2.6-s2} and \eqref{eq:2.6-s3} is somewhat intricate. Hence, it is helpful to give more details. I will use \eqref{eq:2.6-s2} and \eqref{eq:2.6-s3} as instances; one should be able to transplant this procedure to \eqref{eq:2.1-s2} and \eqref{eq:2.1-s3} with no difficulty. Also, a simple \textit{Mathematica} program is implemented to verify related calculations. I am happy to supply the codes upon request.

From \eqref{eq:2.6-s1},
\begin{align*}
U(\xi)&= \frac{E(q^5)^{12}}{E(q)^{12}}U\Big(q^{-4}\big (R(q^{10})^{-1}-q^2-q^4R(q^{10})\big )^3\\
&\quad\quad\quad\quad\quad\quad\times \big (R(q^5)^{-4}+qR(q^5)^{-3}+2q^2R(q^5)^{-2}+3q^3R(q^5)^{-1} \notag\\
&\quad\quad\quad\quad\quad\quad\quad+5q^4-3q^5R(q^5)+2q^6R(q^5)^2-q^7R(q^5)^3+q^8R(q^5)^4\big )^2\Big)\\
& =: \frac{E(q^5)^{12}}{E(q)^{12}}U\big(\Pi(q)\big).
\end{align*}
Expanding $\Pi(q)$, selecting terms in which the power of $q$ is a multiple of $5$ and replacing $q^5$ by $q$ (that is, applying the $U$-operator to $\Pi(q)$), one has
\begin{align*}
U\big(\Pi(q)\big)& = - \frac{15}{R(q)^6 R(q^2)^2} + \frac{20}{R(q)^4 R(q^2)^3} + q \frac{50}{R(q)^5} + \cdots + 75 q^2\\[6pt]
&\quad + \cdots -50 q^3 R(q)^5 - 20 q^4 R(q)^4 R(q^2)^3 - 15 q^4 R(q)^6 R(q^2)^2.
\end{align*}
Grouping the first and last terms gives $-15 q^2 P(2,2)$ by the definition \eqref{eq:P-def}. Likewise, we may obtain $20q^2P(2,1)$, $50q^2P(1,2)$ and all other terms in \eqref{eq:2.1-s2}, and therefore complete the derivation.

To arrive at \eqref{eq:2.1-s3}, one may carry out a careful implementation of the recurrences \eqref{eq:rec-sqr-1} and \eqref{eq:rec-sqr-2} for $P(\alpha,\beta)$ so that each $P(,)$ term in \eqref{eq:2.1-s2} is expressed in terms of $K$:
\begin{align*}
P(2,2)&=16K^{-2}+16K^{-1}+10+4K+K^2,\\
P(2,1)&=4K^{-1}+4+2K+K^2,\\
P(1,2)&=16K^{-2}+8K^{-1}+4+K,\\
P(1,0)&=K,\\
P(1,-1)&=4 K^{-1}-2+K,\\
P(0,3)&=64K^{-3}+ 12 K^{-1},\\
P(0,2)&=16 K^{-2}+2.
\end{align*}
Of course, a direct application of the ``series expansion'' command in most computer algebra systems to \eqref{eq:Hir-gf-1} and \eqref{eq:Hir-gf-2} will make our calculation much easier.

\subsection{General relations}

To state the general relations, let us define two infinite matrices $(a_{i,j})_{i\ge 1, j\ge 1}$ and $(b_{i,j})_{i\ge 0, j\ge 1}$ such that for $1\le i\le 5$,
\begin{gather*}
U(X^i)=\sum_{j\ge 1}a(i,j)X^j,\\
\intertext{and for $0\le i\le 4$,}
U(\xi X^i)=\sum_{j\ge 1}b(i,j)X^j.
\end{gather*}
Both matrices $(a_{i,j})$ (for $i\ge 6$) and $(b_{i,j})$ (for $i\ge 5$) satisfy the same recurrence relation (here $m$ stands for a matrix):
\begin{align*}
m(i,j)&=55m(i-1,j-1)-300m(i-1,j-2)+875m(i-1,j-3)\\
&\quad\quad-1250m(i-1,j-4)+625m(i-1,j-5)\\
&\quad-60m(i-2,j-1)+175m(i-2,j-2)-250m(i-2,j-3)\\
&\quad\quad+125m(i-2,j-4)\\
&\quad+35m(i-3,j-1)-50m(i-3,j-2)+25m(i-3,j-3)\\
&\quad-10m(i-4,j-1)+5m(i-4,j-2)\\
&\quad+m(i-5,j-1),
\end{align*}
in which the undefined entries are assumed to be $0$.

\begin{theorem}
	We have, for $i\ge 1$,
	\begin{gather}
	U(X^i)=\sum_{j\ge 1}a(i,j)X^j,\label{eq:rec:a}\\
	\intertext{and for $i\ge 0$,}
	U(\xi X^i)=\sum_{j\ge 1}b(i,j)X^j.\label{eq:rec:b}
	\end{gather}
\end{theorem}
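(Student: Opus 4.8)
The plan is to prove both identities simultaneously by induction on $i$, using the recurrence for the matrices $(a_{i,j})$ and $(b_{i,j})$ as the inductive engine. First I would establish the base cases: for $1\le i\le 5$, the claimed expansions \eqref{eq:rec:a} and \eqref{eq:rec:b} must be verified directly. Since $X\in K_0(10)$ and $\xi\in K_0(50)$, and since $\xi X^i\in K_0(50)$ while $X^i\in K_0(50)$ as well (viewing $K_0(10)\subset K_0(50)$), the standard fact quoted from \cite[p.~336]{GH1981} gives $U(X^i), U(\xi X^i)\in K_0(10)$. The key structural input is that $K_0(10)$ is a rational function field generated by $X$ (this is what makes the expansion in powers of $X$ possible at all, together with a pole analysis at the cusps showing only nonnegative powers of $X$ occur and the series in $j$ terminates); the explicit coefficients $a(i,j), b(i,j)$ for $i\le 5$ are then pinned down by comparing enough terms of the $q$-expansions, which is the finite computation recorded in Appendix \ref{appen}.

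For the inductive step with $i\ge 6$, the idea originating in Watson's work via Gordon--Hughes is to find a polynomial relation that expresses $X^i$ (and $\xi X^i$) in terms of lower powers twisted by quantities that behave well under $U$. Concretely, $X$ satisfies a modular equation of degree $5$ relating $X(\tau)$ and $X(5\tau)$: there is a monic degree-$5$ polynomial identity
$$X(\tau)^5 + c_4(Y)X(\tau)^4 + c_3(Y)X(\tau)^3 + c_2(Y)X(\tau)^2 + c_1(Y)X(\tau) + c_0(Y)=0,$$
where $Y=Y(\tau)$ is a hauptmodul-type function invariant enough that $U$ acts on products $Y^k\cdot(\text{function of }X(\tau))$ by simply pulling out $Y^k$. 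Multiplying this relation by $X(\tau)^{i-5}$ (respectively by $\xi(\tau)X(\tau)^{i-5}$) and applying $U$, then using $U(Y^k f(X(\tau))) = Y^k\, U(f(X(\tau)))$ and the already-known expansions for the exponents $i-1,\dots,i-5$, produces exactly a five-term recurrence for $U(X^i)$ in terms of $U(X^{i-1}),\dots,U(X^{i-5})$ with coefficients that are polynomials in $X$; collecting the coefficient of each $X^j$ yields precisely the stated recurrence for $m(i,j)$. The coefficients $55, -300, 875, -1250, 625, -60, \dots$ are read off from the coefficients $c_k(Y)$ of the modular equation after re-expressing $Y$ through $X$ via the relevant eta-quotient identities at level $10$.

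The main obstacle, and the step I expect to require the most care, is establishing the correct modular/pole-order bookkeeping that guarantees (a) $U(X^i)$ and $U(\xi X^i)$ really are polynomials in $X$ with no constant term — i.e.\ that the only pole is at the cusp where $X$ has its pole and there is a zero forcing $j\ge 1$ — and (b) that the degree in $j$ stays within the band dictated by the appendix data so that the recurrence closes. This amounts to computing the orders of $\xi$, $X$, and the relevant eta-quotients at each cusp of $R_0(50)$ and $R_0(10)$, tracking how $U$ transforms these orders, and checking the inequalities are sharp enough. Once the valence/order estimates are in hand, the rest is the algebraic manipulation of the degree-$5$ modular equation described above, and the verification that the base cases $i\le 5$ in Appendix \ref{appen} are consistent with both the $q$-expansions of $U(X^i), U(\xi X^i)$ and with feeding into the recurrence at $i=6$; this consistency check is a finite, if lengthy, computation that I would carry out with a computer algebra system.
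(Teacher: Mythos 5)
Your proposal follows essentially the same route as the paper: base cases $1\le i\le 5$ by direct comparison of $q$-expansions, then for $i\ge 6$ a monic degree-$5$ relation satisfied by the conjugates $x_t=X\left(\frac{\tau+t}{5}\right)$ whose coefficients are polynomials in $X(\tau)$, which upon multiplying by $x_t^{i-5}$ (resp.\ $\xi$ times that) and summing over $t$ yields exactly the five-term recurrence defining $m(i,j)$. The one ingredient you leave unspecified --- how the coefficients of that modular equation are actually obtained --- is handled in the paper by applying Newton's identities to the power sums $p_i=5U(X^i)$, $1\le i\le 5$, already computed in the base case, so no auxiliary hauptmodul $Y$ is needed.
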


\begin{proof}
	It suffices to prove \eqref{eq:rec:a} for $i\ge 6$ and \eqref{eq:rec:b} for $i\ge 5$. Let $\zeta=e^{2\pi i/5}$. We define
	$$x_t=X(\zeta^t q)\quad \text{for $t=1,\ldots,5$}.$$
	Further, let $\sigma_t$ be the $t$-th elementary symmetric function of $x_1,\ldots,x_5$, that is,
	$$\sigma_t=\sum_{1\le k_1<k_2<\cdots<k_t\le 5}x_{k_1}x_{k_2}\cdots x_{k_t}.$$	
	For $i\ge 1$, let
	$$p_i=\sum_{t=1}^5 x_t^i=5U(X^i).$$
	Note that $p_i$ is obtained above for $i=1,\ldots, 5$. Therefore, we deduce from Newton's identities (cf.~\cite{Mea1992}) that
	\begin{align*}
	\sigma_1&=p_1\\
	&=55 X -300 X^2 + 875 X^3 -1250 X^4 + 625 X^5,\\
	\sigma_2&=(\sigma_1 p_1-p_2)/2\\
	&=60 X - 175 X^2+250 X^3 - 125 X^4,\\
	\sigma_3&=(\sigma_2 p_1-\sigma_1 p_2+p_3)/3\\
	&=35 X -50 X^2 + 25 X^3,\\
	\sigma_4&=(\sigma_3 p_1-\sigma_2 p_2+\sigma_1 p_3-p_4)/4\\
	&=10 X - 5 X^2,\\
	\sigma_5&=(\sigma_4 p_1-\sigma_3 p_2+\sigma_2 p_3-\sigma_1 p_4+p_5)/5\\
	&=X.
	\end{align*}
	
	Let $u$ be a complex-valued function and write $u_t = u(\zeta^t q)$ for $t=1,\ldots,5$. Since each $x_t$ ($t=1,\ldots,5$) satisfies
	$$x_t^5-\sigma_1 x_t^4+\sigma_2 x_t^3-\sigma_3 x_t^2+\sigma_4 x_t-\sigma_5=0,$$
	we conclude that, for $i\ge 6$,
	\begin{align*}
	5U(u X^i)&=\sum_{t=1}^5 u_t x_t^i=\sum_{t=1}^5 u_t (\sigma_1 x_t^{i-1}-\sigma_2 x_t^{i-2}+\sigma_3 x_t^{i-3}-\sigma_4 x_t^{i-4}+\sigma_5 x_t^{i-5})\\
	&= 5U(u X^{i-1}) \sigma_1- 5U(u X^{i-2}) \sigma_2+ 5U(u X^{i-3})\sigma_3\\
	&\quad - 5U(u X^{i-4})\sigma_4+ 5U(u X^{i-5})\sigma_5.
	\end{align*}
	Finally, \eqref{eq:rec:a} and \eqref{eq:rec:b} follow by taking $u=1$ and $u=\xi X^{-1}$, respectively.
\end{proof}

\subsection{Dissections}

Let us define a family of sequences $(d_\alpha)_{\alpha\ge 1}$ with $d_1$ being
$$(5,\,0,\,0,\,\ldots),$$
and the remaining sequences being recursively defined for $\alpha\ge 2$ by
\begin{align*}
d_{\alpha}(j)=\begin{cases}
\displaystyle\sum_{k\ge 1} a(k,j)d_{\alpha-1}(k) & \text{if $\alpha$ is even},\\[15pt]
\displaystyle\sum_{k\ge 1} b(k,j)d_{\alpha-1}(k) & \text{if $\alpha$ is odd}.
\end{cases}
\end{align*}
Here we write the $j$-th element in $d_\alpha$ as $d_\alpha(j)$.

\begin{remark}
	It is not hard to see that the two infinite matrices $(a_{i,j})$ and $(b_{i,j})$ are row and column finite. We now define $(t_{i,j})_{i, j\ge 1}$ by
	$$t(i,j)=\sum_{k\ge 1} a(i,k)b(k,j).$$
	Note that this sum is indeed a finite sum for fixed $i$ and $j$. Also, it follows from the recurrence relation of $d_\alpha$ that for $\alpha\ge 1$,
	$$d_{2\alpha+1}(j)=\sum_{i\ge 1}t(i,j)d_{2\alpha-1}(i).$$
\end{remark}

Recall that
$$
\sum_{n\ge 0}g(n)q^n=\frac{E(q^2)^3}{E(q)^2}.
$$

\begin{theorem}\label{th:g-gf}
	For $\alpha\ge 1$, we have
	\begin{gather}
	\sum_{n\ge 0} g\left(5^{2\alpha-1}n+\frac{5^{2\alpha}-1}{6}\right)q^n=\frac{E(q^{10})^3}{E(q^5)^2}\sum_{j\ge 1}d_{2\alpha-1}(j)X^j\\
	\intertext{and}
	\sum_{n\ge 0} g\left(5^{2\alpha}n+\frac{5^{2\alpha}-1}{6}\right) q^n=\frac{E(q^{2})^3}{E(q)^2}\sum_{j\ge 1}d_{2\alpha}(j)X^j.
	\end{gather}
\end{theorem}

\begin{proof}
	We know from \eqref{eq:U-xi} that
	\begin{align*}
	5X&=U(\xi)=U\Bigg(q^{-4}\frac{E(q^2)^3 E(q^{25})^2}{E(q)^2 E(q^{50})^3}\Bigg)=U\Bigg(\frac{E(q^{25})^2}{E(q^{50})^3}\sum_{n\ge 0} g(n) q^{n-4}\Bigg)\\
	&=\frac{E(q^5)^2}{E(q^{10})^3}\sum_{n\ge 0} g(5n+4) q^{n}.
	\end{align*}
	This proves the case $\alpha=1$. Assuming that the theorem is true for some positive odd integer $2\alpha-1$, namely,
	$$\sum_{n\ge 0} g\left(5^{2\alpha-1}n+\frac{5^{2\alpha}-1}{6}\right)q^n=\frac{E(q^{10})^3}{E(q^5)^2}\sum_{k\ge 1}d_{2\alpha-1}(k)X^k,$$
	we apply the $U$-operator to both sides and deduce
	\begin{align*}
	\sum_{n\ge 0} g\left(5^{2\alpha}n+\frac{5^{2\alpha}-1}{6}\right) q^n&=\frac{E(q^{2})^3}{E(q)^2}\sum_{k\ge 1}d_{2\alpha-1}(k)U(X^k)\\
	&=\frac{E(q^{2})^3}{E(q)^2}\sum_{k\ge 1} d_{2\alpha-1}(k) \sum_{j\ge 1}a(k,j)X^j\\
	&=\frac{E(q^{2})^3}{E(q)^2}\sum_{j\ge 1}\left(\sum_{k\ge 1}a(k,j)d_{2\alpha-1}(k)\right)X^j\\
	&=\frac{E(q^{2})^3}{E(q)^2}\sum_{j\ge 1}d_{2\alpha}(j)X^j.
	\end{align*}
	Further, multiplying both sides of the above identity by $q^{-4} E(q^{25})^2/E(q^{50})^3$ gives
	$$\frac{E(q^{25})^2}{E(q^{50})^3}\sum_{n\ge 0} g\left(5^{2\alpha}n+\frac{5^{2\alpha}-1}{6}\right) q^{n-4}=\sum_{k\ge 1}d_{2\alpha}(k)\xi X^k.$$
	It follows by applying the $U$-operator once again that
	\begin{align*}
	\sum_{n\ge 0} g\left(5^{2\alpha+1}n+\frac{5^{2\alpha+2}-1}{6}\right)q^n&=\frac{E(q^{10})^3}{E(q^5)^2}\sum_{k\ge 1}d_{2\alpha}(k)U(\xi X^k)\\
	&=\frac{E(q^{10})^3}{E(q^5)^2}\sum_{k\ge 1}d_{2\alpha}(k) \sum_{j\ge 1}b(k,j)X^j\\
	&=\frac{E(q^{10})^3}{E(q^5)^2}\sum_{j\ge 1}\left(\sum_{k\ge 1} b(k,j)d_{2\alpha}(k)\right) X^j\\
	&=\frac{E(q^{10})^3}{E(q^5)^2}\sum_{j\ge 1}d_{2\alpha+1}(j)X^j.
	\end{align*}
	We therefore arrive at the desired result by induction.
\end{proof}

\section{$5$-Adic orders}

For any integer $n$, let $\pi(n)$ be the $5$-adic order of $n$ with the convention that $\pi(0)=\infty$. Let $\fl{x}$ be the largest integer not exceeding $x$.

\subsection{Matrices $(a_{i,j})$ and $(b_{i,j})$}

We now determine the $5$-adic orders of $(a_{i,j})$ and $(b_{i,j})$.

\begin{theorem}
	For positive integers $i$ and $j$, we have
	\begin{gather}
	\pi(a(i,j))\ge \fl{\frac{5j-i-1}{6}}\label{ineq:a}\\
	\intertext{and}
	\pi(b(i,j))\ge \fl{\frac{5j-i+2}{6}}.\label{ineq:b}
	\end{gather}
\end{theorem}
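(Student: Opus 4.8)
The plan is to prove both inequalities simultaneously by strong induction on $i$, exploiting the recurrence that defines the rows $i\ge 6$ of the two matrices. First I would verify the base cases $1\le i\le 5$ directly: the first five rows of $(a_{i,j})$ and $(b_{i,j})$ are tabulated in Appendix~\ref{appen}, so one checks by inspection (or by a short \textit{Mathematica} computation) that $\pi(a(i,j))\ge\fl{(5j-i-1)/6}$ and $\pi(b(i,j))\ge\fl{(5j-i+2)/6}$ for those finitely many nonzero entries. Note that since $a(i,j)=0$ unless roughly $j\le i$ (the matrices are row- and column-finite), for large $j$ the claimed bound is vacuous because the corresponding entry vanishes and $\pi(0)=\infty$; the content of the statement is really about the "leading" band of entries.

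For the inductive step, fix $i\ge 6$ and assume the bounds hold for all smaller row indices. The recurrence expresses $m(i,j)$ (with $m=a$ or $m=b$) as an integer combination
$$m(i,j)=\sum_{r=1}^{5}\ \sum_{s\ge 1} c_{r,s}\, m(i-r,\,j-s),$$
where each coefficient $c_{r,s}$ is an explicit integer with a known power of $5$ dividing it — reading off the recurrence, the term $m(i-r,j-s)$ carries a factor $5^{e(r,s)}$ for an explicitly computable $e(r,s)$ (e.g. the $m(i-1,j-1)$ term has $55=5\cdot 11$ so contributes $5^1$, the $m(i-1,j-5)$ term has $625=5^4$, the $m(i-5,j-1)$ term has coefficient $1$, i.e. $5^0$, etc.). So it suffices to show that for every term appearing in the recurrence,
$$e(r,s)+\pi\big(m(i-r,j-s)\big)\ \ge\ \fl{\frac{5j-i-1}{6}}\quad(\text{resp. }+2\text{ for }b),$$
and then $\pi(m(i,j))$, being at least the minimum of the $5$-adic orders of the summands, inherits the bound (recall $\pi(x+y)\ge\min(\pi(x),\pi(y))$). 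By the induction hypothesis $\pi(m(i-r,j-s))\ge\fl{(5(j-s)-(i-r)-1)/6}$ (resp. $+2$), so the whole inductive step reduces to the purely arithmetic inequality
$$e(r,s)+\fl{\frac{5(j-s)-(i-r)-1}{6}}\ \ge\ \fl{\frac{5j-i-1}{6}}$$
for each of the roughly fifteen $(r,s)$ pairs occurring in the recurrence — and the same inequality, verbatim, handles the $b$-case since the additive constant $+2$ appears identically on both sides. Using $\fl{x}-\fl{y}\ge\fl{x-y}-1$ type estimates, this inequality becomes $e(r,s)\ge \fl{(5s-r)/6}+1$ (up to the standard off-by-one from the floor function), which one checks pair by pair: the worst cases are those with large $s$ and small $r$, e.g. $(r,s)=(1,5)$ needs $e\ge\fl{24/6}+\cdots$, matched by $625=5^4$, and $(r,s)=(5,1)$ needs only $e\ge 0$, matched by the coefficient $1$.

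The main obstacle — and the only place real care is needed — is the bookkeeping with the floor functions in that final arithmetic inequality: one must confirm that for \emph{every} $(r,s)$ pair in the recurrence the power of $5$ in the coefficient exactly compensates the shift $(5s-r)/6$ in the bound, with no slack lost to rounding, and in particular that the tightest pairs (those where $e(r,s)=\lceil(5s-r)/6\rceil$ with equality) still go through. I expect this to be a finite, mechanical but delicate case analysis; once it is done for the $a$-recurrence it transfers immediately to the $b$-recurrence because the matrices satisfy the \emph{same} recurrence and the two target bounds differ only by the constant shift from $-1$ to $+2$ in the numerator, which cancels out of the inductive comparison. The base-case verification for $1\le i\le 5$ is then the only genuinely computational ingredient.
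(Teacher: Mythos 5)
Your proposal is correct and follows essentially the same route as the paper: verify rows $1\le i\le 5$ directly from the tabulated entries, then induct on $i$ using the recurrence, checking for each of the fifteen $(r,s)$ pairs that the $5$-adic valuation of the coefficient compensates the shift in the floor bound, with the $b$-case following verbatim since the constant in the numerator cancels from the comparison. The only caution is that the exact arithmetic requirement is $e(r,s)\ge\lceil(5s-r)/6\rceil$ rather than $\fl{(5s-r)/6}+1$ (these differ precisely at $(r,s)=(5,1)$, which you correctly handle separately), so the pair-by-pair check you outline goes through exactly as in the paper's fifteen displayed inequalities.
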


\begin{proof}
	Since the first five rows of $(a_{i,j})$ are given, one may directly check \eqref{ineq:a} for $1\le i\le 5$. Assume that \eqref{ineq:a} holds for $1,\ldots,i-1$ with some $i\ge 6$. We have (for some undefined entries like $a(i,0)$, since we assign its value to be $0$, its $5$-adic order is therefore $\infty$):
	\begin{align*}
	\pi(a(i-1,j-1))+1\ge \fl{\dfrac{5(j-1)-(i-1)-1}{6}}+1&\ge \fl{\dfrac{5j-i-1}{6}},\\
	\pi(a(i-1,j-2))+2\ge \fl{\dfrac{5(j-2)-(i-1)-1}{6}}+2&\ge \fl{\dfrac{5j-i-1}{6}},\\
	\pi(a(i-1,j-3))+3\ge \fl{\dfrac{5(j-3)-(i-1)-1}{6}}+3&\ge \fl{\dfrac{5j-i-1}{6}},\\
	\pi(a(i-1,j-4))+4\ge \fl{\dfrac{5(j-4)-(i-1)-1}{6}}+4&\ge \fl{\dfrac{5j-i-1}{6}},\\
	\pi(a(i-1,j-5))+4\ge \fl{\dfrac{5(j-5)-(i-1)-1}{6}}+4&\ge \fl{\dfrac{5j-i-1}{6}},\\
	\pi(a(i-2,j-1))+1\ge \fl{\dfrac{5(j-1)-(i-2)-1}{6}}+1&\ge \fl{\dfrac{5j-i-1}{6}},\\
	\pi(a(i-2,j-2))+2\ge \fl{\dfrac{5(j-2)-(i-2)-1}{6}}+2&\ge \fl{\dfrac{5j-i-1}{6}},\\
	\pi(a(i-2,j-3))+3\ge \fl{\dfrac{5(j-3)-(i-2)-1}{6}}+3&\ge \fl{\dfrac{5j-i-1}{6}},\\
	\pi(a(i-2,j-4))+3\ge \fl{\dfrac{5(j-4)-(i-2)-1}{6}}+3&\ge \fl{\dfrac{5j-i-1}{6}},\\
	\pi(a(i-3,j-1))+1\ge \fl{\dfrac{5(j-1)-(i-3)-1}{6}}+1&\ge \fl{\dfrac{5j-i-1}{6}},\\
	\pi(a(i-3,j-2))+2\ge \fl{\dfrac{5(j-2)-(i-3)-1}{6}}+2&\ge \fl{\dfrac{5j-i-1}{6}},\\
	\pi(a(i-3,j-3))+2\ge \fl{\dfrac{5(j-3)-(i-3)-1}{6}}+2&\ge \fl{\dfrac{5j-i-1}{6}},\\
	\pi(a(i-4,j-1))+1\ge \fl{\dfrac{5(j-1)-(i-4)-1}{6}}+1&\ge \fl{\dfrac{5j-i-1}{6}},\\
	\pi(a(i-4,j-2))+1\ge \fl{\dfrac{5(j-2)-(i-4)-1}{6}}+1&\ge \fl{\dfrac{5j-i-1}{6}},\\
	\pi(a(i-5,j-1))+0\ge \fl{\dfrac{5(j-1)-(i-5)-1}{6}}+0&\ge \fl{\dfrac{5j-i-1}{6}}.
	\end{align*}
	It follows from the recurrence relation of $(a_{i,j})$ that $\pi(a(i,j))$ is at least the minimum of the left-hand sides of the above 15 inequalities. We therefore finish the proof of \eqref{ineq:a} by induction.
	
	Note that $(b_{i,j})$ shares the same recurrence relation with $(a_{i,j})$. Therefore, \eqref{ineq:b} can be shown analogously and the details are omitted.
\end{proof}

From the definition of $(t_{i,j})$, we have
\begin{align}\label{ineq:t}
\pi(t(i,j))&\ge \min_{k\ge 1}\left\{\pi(a(i,k))+\pi(b(k,j))\right\}\notag\\
&\ge \min_{k\ge 1}\left\{\fl{\frac{5k-i-1}{6}}+\fl{\frac{5j-k+2}{6}}\right\}.
\end{align}

\subsection{Sequences $d_{2\alpha-1}$}

We next study the $5$-adic order of $d_{2\alpha-1}(j)$ for positive integers $\alpha$ and $j$.

\begin{theorem}\label{th:5-adic-d}
	For positive integers $\alpha$ and $j$, we have
	\begin{align}\label{ineq:d}
	\pi(d_{2\alpha-1}(j))\ge \alpha+\fl{\frac{5j-5}{6}}.
	\end{align}
\end{theorem}

\begin{proof}
	It follows from $d_1=(5,0,0,\ldots)$ that \eqref{ineq:d} is true for $\alpha=1$. Assuming that the theorem holds for some positive $\alpha$, we now show
	$$\pi(d_{2\alpha+1}(j))\ge \alpha+1+\fl{\frac{5j-5}{6}}$$
	for all $j\ge 1$ and therefore conclude the theorem by induction.
	
	Note that for $\alpha\ge 1$, we have
	$$d_{2\alpha+1}(j)=\sum_{i\ge 1}t(i,j)d_{2\alpha-1}(i).$$
	Hence,
	$$\pi(d_{2\alpha+1}(j))\ge \min_{i\ge 1}\left\{\pi(d_{2\alpha-1}(i))+\pi(t(i,j))\right\}.$$
	It follows from \eqref{ineq:t} that
	\begin{align*}
	\pi(d_{2\alpha-1}(i))+\pi(t(i,j))&\ge \alpha+\fl{\frac{5i-5}{6}}+\min_{k\ge 1}\left\{\fl{\frac{5k-i-1}{6}}+\fl{\frac{5j-k+2}{6}}\right\}\\
	&\ge \alpha+\fl{\frac{5i-5}{6}}+\min_{k\ge 1}\left\{\fl{\frac{5j-i+4k-5}{6}}\right\}\\
	&=\alpha+\fl{\frac{5i-5}{6}}+\fl{\frac{5j-i-1}{6}}\\
	&\ge \alpha+1+\fl{\frac{5j-5}{6}}+\fl{\frac{4i-13}{6}}.
	\end{align*}
	Observing that $4i-13> 0$ for $i\ge 4$, it turns out that we only need to examine the cases $1\le i\le 3$.
	
	For $i=3$, we have
	\begin{align*}
	\pi(d_{2\alpha-1}(3))+\pi(t(3,j))&\ge \alpha+1+\min_{k\ge 1}\left\{\fl{\frac{5k-3-1}{6}}+\fl{\frac{5j-k+2}{6}}\right\}\\
	&\ge \alpha+1+\fl{\frac{5j-4}{6}}\ge \alpha+1+\fl{\frac{5j-5}{6}}.
	\end{align*}
	
	For $i=1$ and $2$, we have
	\begin{align*}
	\pi(d_{2\alpha-1}(i))+\pi(t(i,j))&\ge \alpha+\min_{k\ge 1}\left\{\fl{\frac{5k-i-1}{6}}+\fl{\frac{5j-k+2}{6}}\right\}.
	\end{align*}
	Further,
	\begin{align*}
	\fl{\frac{5k-i-1}{6}}+\fl{\frac{5j-k+2}{6}}&\ge
	\begin{cases}
	\fl{\frac{5j+1}{6}} & \text{if $k=1$}\\[5pt]
	\fl{\frac{5j+4k-7}{6}} & \text{if $k\ge2$}
	\end{cases}\\
	&\ge \fl{\frac{5j-5}{6}}+1.
	\end{align*}
	
	We therefore have, for all $j\ge 1$,
	$$\pi(d_{2\alpha+1}(j))\ge \alpha+1+\fl{\frac{5j-5}{6}},$$
	which is our desired inequality.
\end{proof}

\section{Congruences for 1-shell TSPPs}\label{Sect:cong}

\subsection{Proof of the main theorem}

Now we are in the position of proving Theorem \ref{th:main}. By Theorems \ref{th:g-gf} and \ref{th:5-adic-d}, we have
\begin{align}\label{eq:g-mod-powers-5}
\sum_{n\ge 0} g\left(5^{2\alpha-1}n+\frac{5^{2\alpha}-1}{6}\right)q^n&=\frac{E(q^{10})^3}{E(q^5)^2}\sum_{j\ge 1}d_{2\alpha-1}(j)X^j\notag\\
&\equiv 0 \pmod{5^\alpha}.
\end{align}

To show
$$s\left(2\cdot 5^{2\alpha-1}n+5^{2\alpha-1}\right)\equiv 0 \pmod{5^{\alpha}},$$
we first notice that
$$s(2\cdot 5^{2\alpha-1}(3n)+5^{2\alpha-1})=s(6\cdot 5^{2\alpha-1}n+5^{2\alpha-1})=0$$
since $6\cdot 5^{2\alpha-1}n+5^{2\alpha-1} \equiv 2 \pmod{3}$ and
$$s(2\cdot 5^{2\alpha-1}(3n+1)+5^{2\alpha-1})=s(6\cdot 5^{2\alpha-1}n+3\cdot 5^{2\alpha-1})=0$$
since $6\cdot 5^{2\alpha-1}n+3\cdot 5^{2\alpha-1} \equiv 0 \pmod{3}$. Hence, we only need to examine
$$s\left(2\cdot 5^{2\alpha-1}(3n+2)+5^{2\alpha-1}\right)=s\left(6\cdot 5^{2\alpha-1}n+5^{2\alpha}\right)\equiv 0 \pmod{5^\alpha}.$$
Finally, it follows from \eqref{eq:fg} and \eqref{eq:g-mod-powers-5} that
\begin{align*}
s\left(6\cdot 5^{2\alpha-1}n+5^{2\alpha}\right)&=s\left(6\left(5^{2\alpha-1}n+\frac{5^{2\alpha}-1}{6}\right)+1\right)\\
&=g\left(5^{2\alpha-1}n+\frac{5^{2\alpha}-1}{6}\right)\equiv 0 \pmod{5^{\alpha}}.
\end{align*}

\subsection{More congruences}\label{Sect:more-cong}

We begin with one of Ramanujan's theta functions
$$\phi(q):=\sum_{n=-\infty}^\infty q^{n^2}.$$
It is known that
$$\phi(-q)=\frac{E(q)^2}{E(q^2)}.$$

Ramanujan obtained the 5-dissection formula of $\phi(-q)$ (cf.~\cite[(36.3.2) with $q$ replaced by $-q$]{Hir2017}) as follows.
\begin{lemma}\label{le:phi5}
	We have
	\begin{equation}
	\phi(-q)=\phi(-q^{25})-2qM_1(-q^{5})+2q^4M_2(-q^{5}),
	\end{equation}
	where $M_1(-q)=(q^3,q^7,q^{10};q^{10})_\infty$ and $M_2(-q)=(q,q^9,q^{10};q^{10})_\infty$.
\end{lemma}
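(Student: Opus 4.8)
The plan is to obtain the dissection directly from the series $\varphi(-q)=\sum_{n=-\infty}^{\infty}(-1)^n q^{n^2}$ by sorting this bilateral sum according to the residue of $n$ modulo $5$. Since the quadratic residues modulo $5$ are $0,1,4$, only three blocks of exponents can occur; writing $n=5m$, $n=5m\pm1$, $n=5m\pm2$ and using $(-1)^{5m}=(-1)^m$, I would split
\begin{align*}
\varphi(-q)&=\sum_{m=-\infty}^{\infty}(-1)^m q^{25m^2}\\
&\quad+\sum_{m=-\infty}^{\infty}\left((-1)^{5m+1}q^{(5m+1)^2}+(-1)^{5m-1}q^{(5m-1)^2}\right)\\
&\quad+\sum_{m=-\infty}^{\infty}\left((-1)^{5m+2}q^{(5m+2)^2}+(-1)^{5m-2}q^{(5m-2)^2}\right),
\end{align*}
and the first sum is immediately $\varphi(-q^{25})$.

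Next I would collapse each of the two remaining double sums using the symmetry $m\mapsto-m$, under which the $n=5m-1$ summand turns into the $n=5m+1$ summand (and similarly for $\pm2$). Expanding $(5m\pm1)^2$ and $(5m\pm2)^2$, this rewrites the middle block as $-2q\sum_{m}(-1)^m q^{25m^2+10m}$ and the last block as $2q^4\sum_{m}(-1)^m q^{25m^2+20m}$.

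The decisive step is to evaluate these two theta sums via the Jacobi triple product identity in Ramanujan's form $f(a,b)=\sum_{n=-\infty}^{\infty}a^{n(n+1)/2}b^{n(n-1)/2}=(-a;ab)_\infty(-b;ab)_\infty(ab;ab)_\infty$. Because $n(n+1)/2+n(n-1)/2=n^2$, the factor $(-1)^m$ is absorbed by taking $a,b$ to be negative powers of $q$; solving the two linear systems for the exponents yields $\sum_{m}(-1)^m q^{25m^2+10m}=f(-q^{35},-q^{15})=(q^{15},q^{35},q^{50};q^{50})_\infty$ and $\sum_{m}(-1)^m q^{25m^2+20m}=f(-q^{45},-q^{5})=(q^{5},q^{45},q^{50};q^{50})_\infty$. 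Since these products are exactly $M_1(-q^5)$ and $M_2(-q^5)$, the claimed identity drops out.

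I expect the only real care needed to be in this last step — matching exponents in the triple product and checking the resulting infinite products against the definitions of $M_1$ and $M_2$ — which is routine. Alternatively, the lemma being classical, one may simply quote Berndt; the sketch above records the self-contained route.
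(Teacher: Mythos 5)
Your dissection argument is correct: the split of $\sum_n(-1)^nq^{n^2}$ by residues of $n$ modulo $5$, the sign bookkeeping $(-1)^{5m\pm1}=-(-1)^m$ and $(-1)^{5m\pm2}=(-1)^m$, the collapse of the $\pm$ pairs under $m\mapsto-m$, and the triple-product evaluations $\sum_m(-1)^mq^{25m^2+10m}=f(-q^{35},-q^{15})=(q^{15},q^{35},q^{50};q^{50})_\infty=M_1(-q^5)$ and $\sum_m(-1)^mq^{25m^2+20m}=f(-q^{45},-q^{5})=(q^{5},q^{45},q^{50};q^{50})_\infty=M_2(-q^5)$ all check out, including the slightly delicate point that $M_i(-q^5)$ means the product obtained by substituting $q\mapsto q^5$ in the defining product of $M_i(-q)$. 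The paper, however, does not prove this lemma at all: it is quoted verbatim as a classical result of Ramanujan with the reference to Berndt's \emph{Ramanujan's Notebooks, Part III}, p.~49, Corollary~(i), exactly as your final remark anticipates. So your write-up supplies a short self-contained derivation where the paper is content to cite; the citation keeps the paper lean, while your argument makes the $5$-dissection transparent and shows in passing why only the exponent classes $0,1,4$ modulo $5$ (the quadratic residues) can occur --- the fact exploited later in Section~4.2 to extract the congruences for $g(15625n+8854)$ and $g(15625n+11979)$. Either route is acceptable; if you include your proof, just make sure to state which form of the Jacobi triple product you are invoking (Ramanujan's $f(a,b)=(-a;ab)_\infty(-b;ab)_\infty(ab;ab)_\infty$), since that is the only external ingredient.
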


We also have (cf.~\cite[(36.3.4) with $q^5$ replaced by $-q$]{Hir2017}) the following relation between $M_1(-q)$ and $M_2(-q)$.
\begin{lemma}\label{le:m1m2}
	We have
	\begin{equation}
	4qM_1(-q)M_2(-q)=\phi(-q^5)^2-\phi(-q)^2.
	\end{equation}
\end{lemma}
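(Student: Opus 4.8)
The plan is to obtain the identity directly from the $5$-dissection of $\varphi(-q)$ recorded in Lemma~\ref{le:phi5}, together with one classical fact about sums of two squares. Write $r_2(k)$ for the number of pairs $(m,n)\in\mathbb Z^2$ with $m^2+n^2=k$, with the convention $r_2(x)=0$ for $x\notin\mathbb Z_{\ge 0}$. Since $(-1)^{m+n}=(-1)^{m^2+n^2}$,
\[
\varphi(-q)^2=\sum_{m,n}(-1)^{m+n}q^{m^2+n^2}=\sum_{k\ge 0}(-1)^k r_2(k)\,q^k .
\]
For a power series $F(q)=\sum_k c_k q^k$ let $[F]_0:=\sum_{5\mid k}c_k q^k$ denote its part supported on exponents divisible by $5$.

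First I would square the identity of Lemma~\ref{le:phi5}. Because $m^2\equiv 0,1,4\pmod 5$, each of the six resulting products occupies a single residue class of $q$-degrees modulo $5$: one checks that $\varphi(-q^{25})^2$ and $-8q^5 M_1(-q^5)M_2(-q^5)$ have degrees $\equiv 0$, while $-4q\,\varphi(-q^{25})M_1(-q^5)$, $4q^2 M_1(-q^5)^2$, $4q^8 M_2(-q^5)^2$, and $4q^4\varphi(-q^{25})M_2(-q^5)$ have degrees $\equiv 1,2,3,4$, respectively. Hence
\[
\bigl[\varphi(-q)^2\bigr]_0=\varphi(-q^{25})^2-8q^5 M_1(-q^5)M_2(-q^5).
\]
Next I would compute the same quantity from the coefficient formula: $[\varphi(-q)^2]_0=\sum_{j}(-1)^{5j}r_2(5j)q^{5j}=\sum_j(-1)^j r_2(5j)q^{5j}$. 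At this point I invoke the classical relation
\[
r_2(5n)+r_2(n/5)=2\,r_2(n)\qquad(n\ge 1),
\]
which is immediate from Jacobi's two-squares theorem (the function $n\mapsto r_2(n)/4$ is multiplicative with $r_2(5^a)/4=a+1$, since $5\equiv 1\pmod 4$); equivalently it is the Hecke eigenvalue relation $T_5\bigl(\varphi(q)^2\bigr)=2\,\varphi(q)^2$ for the weight-$1$ theta form $\varphi(q)^2$. Substituting this and using $\sum_j(-1)^j r_2(j)q^{5j}=\varphi(-q^5)^2$ and $\sum_j(-1)^j r_2(j/5)q^{5j}=\varphi(-q^{25})^2$ gives
\[
\bigl[\varphi(-q)^2\bigr]_0=2\,\varphi(-q^5)^2-\varphi(-q^{25})^2 .
\]

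Comparing the two expressions for $[\varphi(-q)^2]_0$ yields $8q^5 M_1(-q^5)M_2(-q^5)=2\,\varphi(-q^{25})^2-2\,\varphi(-q^5)^2$; dividing by $2$ and replacing $q^5$ by $q$ (legitimate, since every series occurring is a power series in $q^5$) produces exactly $4qM_1(-q)M_2(-q)=\varphi(-q^5)^2-\varphi(-q)^2$. I expect the only delicate point to be the arithmetic identity $r_2(5n)+r_2(n/5)=2r_2(n)$, i.e. pinning down and citing the correct normalisation of the two-squares/Hecke fact; the squaring and the residue bookkeeping are routine.

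As an alternative that avoids $\varphi$ altogether: by Jacobi's triple product, $M_1(-q)M_2(-q)=(q,q^3,q^7,q^9;q^{10})_\infty(q^{10};q^{10})_\infty^2=\ee{1}\,\ee{10}^3/(\ee{2}\,\ee{5})$ and $\varphi(-q)=\ee{1}^2/\ee{2}$, so the claim is equivalent to the eta-quotient identity
\[
4q\,\ee{1}\,\ee{2}\,\ee{10}^5=\ee{2}^2\,\ee{5}^5-\ee{1}^4\,\ee{5}\,\ee{10}^2 .
\]
Up to a fixed power of $q$, the three terms are eta-products of weight $7/2$ on $\Gamma_0(10)$ (or of integral weight $7$ after squaring), and $\Gamma_0(10)$ has only finitely many cusps, so a Sturm-type bound reduces the identity to verifying finitely many Fourier coefficients with \textit{Mathematica}. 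Of course, the statement is precisely Entry~10(iv) of \cite[p.~262]{Ber1991}, so one may simply quote it.
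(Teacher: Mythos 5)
Your argument is correct, but it does something genuinely different from the paper: the paper offers no proof at all for this lemma, simply citing it as Entry 10(iv) on p.~262 of Berndt's \emph{Ramanujan's Notebooks, Part III}. Your derivation is a legitimate self-contained substitute. The bookkeeping checks out: squaring the dissection of Lemma \ref{le:phi5} does sort the six products into the residue classes $0,1,2,3,4,0$ modulo $5$ exactly as you say, so the $0$-part is $\varphi(-q^{25})^2-8q^5M_1(-q^5)M_2(-q^5)$; and the relation $r_2(5n)+r_2(n/5)=2r_2(n)$ is correct (it even holds at $n=0$ with $r_2(0)=1$, which covers the constant term you excluded by restricting to $n\ge1$ --- worth one explicit sentence). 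Comparing the two expressions and replacing $q^5$ by $q$ then gives the identity with the right sign, as a low-order coefficient check confirms ($4q-4q^2-4q^4+4q^5+\cdots$ on both sides). What your route buys is independence from the external reference at the cost of importing the two-squares multiplicativity; what the paper's route buys is brevity. Your alternative reduction to the eta-quotient identity $4q\,\ee{1}\ee{2}\ee{10}^5=\ee{2}^2\ee{5}^5-\ee{1}^4\ee{5}\ee{10}^2$ is also correctly set up (though the simplification of $M_1(-q)M_2(-q)$ is just a rearrangement of the infinite products, not an application of the Jacobi triple product), and a Sturm-bound verification on $\Gamma_0(10)$ would indeed be in the same computational spirit as the paper's verification of \eqref{eq:rec:a} and \eqref{eq:rec:b} for $1\le i\le 5$.
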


Observing that $d_{2\alpha-1}(j)\ge \alpha+\fl{(5j-5)/6}\ge \alpha+1$ for $j\ge 3$, one has
\begin{align*}
&\sum_{n\ge 0} g\left(5^{2\alpha-1}n+\frac{5^{2\alpha}-1}{6}\right)q^n\\
&\qquad\qquad\equiv \frac{E(q^{10})^3}{E(q^5)^2} \left(d_{2\alpha-1}(1)X+d_{2\alpha-1}(2)X^2\right) \pmod{5^{\alpha+1}}.
\end{align*}
We further notice that
$$X=\frac{E(q^2)^2 E(q^5)^4}{E(q)^4 E(q^{10})^2}\equiv \left(\frac{E(q)^2}{E(q^2)}\right)^8=\phi(-q)^8 \pmod{5}.$$

\medskip

One may compute that
\begin{gather*}
d_3(1)\equiv 3\cdot 5^2 \pmod{5^3}\\
\intertext{and}
d_3(2)\equiv  5^2 \pmod{5^3}.
\end{gather*}
Hence,
\begin{align*}
&\sum_{n\ge 0} g\left(125n+104\right)q^n\\
&\qquad\equiv 25\frac{E(q^{10})^3}{E(q^5)^2} \left(3\phi(-q)^8+\phi(-q)^{16}\right)\\
&\qquad\equiv25\frac{E(q^{10})^3}{E(q^5)^2} \left(3\phi(-q)^3\phi(-q^5)+\phi(-q)\phi(-q^5)^3\right) \pmod{125}.
\end{align*}
We now apply Lemmas \ref{le:phi5} and \ref{le:m1m2} to obtain
\begin{align*}
&\sum_{n\ge 0} g\left(625n+229\right)q^n= \sum_{n\ge 0} g\left(125(5n+1)+104\right)q^n\\
&\equiv 25\frac{E(q^{2})^3}{E(q)^2} \left(3\phi(-q)M_1(-q)\left(4\phi(-q^5)^2+4qM_1(-q)M_2(-q)\right)+3\phi(-q)^{3}M_1(-q)\right)\\
&\equiv 75M_1(-q)\frac{E(q^{2})^3}{E(q)^2}\left(4\phi(-q)\phi(-q^5)^2+\phi(-q)\left(\phi(-q^5)^2-\phi(-q)^2\right)+\phi(-q)^{3}\right)\\
&\equiv 0\pmod{125}.
\end{align*}
Likewise,
\begin{align*}
&\sum_{n\ge 0} g\left(625n+604\right)q^n= \sum_{n\ge 0} g\left(125(5n+4)+104\right)q^n\\
&\equiv 50M_2(-q)\frac{E(q^{2})^3}{E(q)^2}\left(4\phi(-q)\phi(-q^5)^2+\phi(-q)\left(\phi(-q^5)^2-\phi(-q)^2\right)+\phi(-q)^{3}\right)\\
&\equiv 0\pmod{125}.
\end{align*}
Consequently, we arrive at an alternative (and indeed elementary) proof of \eqref{eq:cong-chern}:
$$s(1250n+125,\ 1125)\equiv 0 \pmod{125},$$
since $s(n)=0$ for $n\ge 1$ with $n\equiv 0,2 \pmod{3}$, and $s(6n+1)=g(n)$ for $n\ge 0$.

\medskip

One may further compute that
\begin{gather*}
d_5(1)\equiv 0 \pmod{5^4}\\
\intertext{and}
d_5(2)\equiv  4\cdot 5^3 \pmod{5^4}.
\end{gather*}
Hence,
\begin{align*}
\sum_{n\ge 0} g\left(3125n+2604\right)q^n&\equiv 4\cdot 125\frac{E(q^{10})^3}{E(q^5)^2} \phi(-q)^{16}\\
&\equiv4\cdot 125\frac{E(q^{10})^3}{E(q^5)^2} \phi(-q)\phi(-q^5)^3  \pmod{625}.
\end{align*}
It follows from Lemma \ref{le:phi5} that $\phi(-q)$ contains no terms in which the power of $q$ is $2$ or $3$ modulo $5$. Hence,
\begin{gather*}
g(15625n+8854)=g(3125(5n+2)+2604)\equiv 0 \pmod{625}\\
\intertext{and}
g(15625n+11979)=g(3125(5n+3)+2604)\equiv 0 \pmod{625}.
\end{gather*}
Consequently, we have two congruences modulo $625$ as follows.
\begin{theorem}
	For $n\ge 0$, we have
	\begin{gather}
	s(31250n+9375)\equiv 0 \pmod{625}\\
	\intertext{and}
	s(31250n+21875)\equiv 0 \pmod{625}.
	\end{gather}
\end{theorem}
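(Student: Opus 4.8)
The plan is to deduce both congruences from the two $g$-congruences $g(15625n+8854)\equiv 0\pmod{625}$ and $g(15625n+11979)\equiv 0\pmod{625}$ already obtained above, combined with the two elementary facts recalled in the introduction: $s(6n+1)=g(n)$ for $n\ge 0$, and $s(n)=0$ whenever $n\equiv 0$ or $2\pmod 3$ and $n\ge 1$. This is precisely the ``spreading'' device used in the proof of Theorem~\ref{th:main}: we embed the target progression into the image of $m\mapsto s(6m+1)$ after splitting the variable into residue classes modulo $3$.

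For the first congruence, write $31250n+9375=2\cdot 5^6 n+3\cdot 5^5$ and replace $n$ by $3n$, $3n+1$, $3n+2$ in turn. Using $5\equiv 2\pmod 3$ one checks that the arguments arising from $n\mapsto 3n$ and $n\mapsto 3n+1$ are $\equiv 0$ and $\equiv 2\pmod 3$ respectively, so the corresponding values of $s$ vanish. The class $n\mapsto 3n+2$ yields $s\!\left(6\cdot 5^6 n+23\cdot 5^5\right)$; since $23\cdot 5^5\equiv 1\pmod 3$, the quantity $\bigl(23\cdot 5^5-1\bigr)/6=11979$ is an integer, and hence $s\!\left(6\cdot 5^6 n+23\cdot 5^5\right)=s\bigl(6(15625n+11979)+1\bigr)=g(15625n+11979)\equiv 0\pmod{625}$. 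Therefore $s(31250n+9375)\equiv 0\pmod{625}$ for every $n\ge 0$.

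The second congruence is entirely analogous: writing $31250n+21875=2\cdot 5^6 n+7\cdot 5^5$ and substituting $n\mapsto 3n$, $3n+1$, $3n+2$, the classes $n\mapsto 3n$ and $n\mapsto 3n+2$ produce arguments $\equiv 2\pmod 3$ and $\equiv 0\pmod 3$, hence contribute $0$, while $n\mapsto 3n+1$ gives $s\!\left(6\cdot 5^6 n+17\cdot 5^5\right)=s\bigl(6(15625n+8854)+1\bigr)=g(15625n+8854)\equiv 0\pmod{625}$.

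There is no substantial obstacle here; the whole argument is bookkeeping. The only point requiring care is the reduction modulo $3$: one must verify, using $5^5\equiv 2\pmod 3$, that in each of the two cases exactly one of the three progressions survives, that the surviving argument is $\equiv 1\pmod 3$ so that the shift $(c\cdot 5^5-1)/6$ is a nonnegative integer, and that the other two progressions land in the residue classes on which $s$ is identically zero. Once this is confirmed, both statements follow at once from the $g$-congruences established prior to the theorem.
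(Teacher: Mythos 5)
Your argument is correct and is exactly the paper's intended deduction: the paper states the theorem as "a direct consequence" of the two congruences $g(15625n+8854)\equiv 0$ and $g(15625n+11979)\equiv 0 \pmod{625}$ together with $s(6n+1)=g(n)$ and the vanishing of $s$ on the residue classes $0,2 \pmod 3$, which is precisely the case-splitting modulo $3$ you carry out (and your arithmetic, e.g.\ $23\cdot 5^5=6\cdot 11979+1$ and $17\cdot 5^5=6\cdot 8854+1$, checks out). Nothing further is needed.
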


\subsection*{Acknowledgements}

I would like to acknowledge my gratitude to the referee for helpful comments on an earlier draft of this paper.

\bibliographystyle{amsplain}

\begin{thebibliography}{99}
	
\bibitem{Ble2012}
A. Blecher, Geometry for totally symmetric plane partitions (TSPPs) with self-conjugate main diagonal, \textit{Util. Math.} \textbf{88} (2012), 223--235.

\bibitem{Che2017}
S. Chern, Congruences for 1-shell totally symmetric plane partitions, \textit{Integers} \textbf{17} (2017), Paper No. A21, 7 pp.

\bibitem{CH2019}
S. Chern and M. D. Hirschhorn, Partitions into distinct parts modulo powers of $5$, \textit{Ann. Comb.} \textbf{23} (2019), no. 3-4, 659--682.

\bibitem{CT2019}
S. Chern and D. Tang, The Rogers--Ramanujan continued fraction and related eta-quotient representations, submitted.

\bibitem{GH1981}
B. Gordon and K. Hughes, Ramanujan congruences for $q(n)$, Analytic number theory (Philadelphia, Pa., 1980), pp. 333--359, Lecture Notes in Math., 899, Springer, Berlin-New York, 1981.

\bibitem{Hir2017}
M. D. Hirschhorn, \textit{The power of $q$. A personal journey}, Developments in Mathematics, 49. Springer, Cham, 2017.

\bibitem{Hir2019}
M. D. Hirschhorn, private communication (August 2019).

\bibitem{HS2014}
M. D. Hirschhorn and J. A. Sellers, Arithmetic properties of 1-shell totally symmetric plane partitions, \textit{Bull. Aust. Math. Soc.} \textbf{89} (2014), no. 3, 473--478.

\bibitem{Mea1992}
D. G. Mead, Newton's identities, \textit{Amer. Math. Monthly} \textbf{99} (1992), no. 8, 749--751.

\bibitem{Xia2015}
E. X. W. Xia, A new congruence modulo 25 for 1-shell totally symmetric plane partitions, \textit{Bull. Aust. Math. Soc.} \textbf{91} (2015), no. 1, 41--46.

\end{thebibliography}

\end{document}